\documentclass[10pt]{article}
\textwidth= 5.00in
\textheight= 7.4in
\topmargin = 30pt
\evensidemargin=0pt
\oddsidemargin=55pt
\headsep=17pt
\parskip=.5pt
\parindent=12pt
\font\smallit=cmti10
\font\smalltt=cmtt10

\usepackage{amssymb,latexsym,amsmath,epsfig,amsthm}
\usepackage{amsmath, amscd, amssymb, amsthm, xspace, graphicx}
\usepackage[all]{xy}

\makeatletter

\renewcommand\section{\@startsection {section}{1}{\z@}
	{-30pt \@plus -1ex \@minus -.2ex}
	{2.3ex \@plus.2ex}
	{\normalfont\normalsize\bfseries}}

\renewcommand\subsection{\@startsection{subsection}{2}{\z@}
	{-3.25ex\@plus -1ex \@minus -.2ex}
	{1.5ex \@plus .2ex}
	{\normalfont\normalsize\bfseries}}

\renewcommand{\@seccntformat}[1]{\csname the#1\endcsname. }

\makeatother

\newtheorem{theorem}{Theorem}[section]

\newtheorem{conjecture}{Conjecture}
\newtheorem{proposition}{Proposition}

\theoremstyle{definition}
\newtheorem{definition}{Definition}

\begin{document}

\begin{center}
	\uppercase{\bf Consecutive Integers and the Collatz Conjecture}
	\vskip 20pt
	{\bf Marcus Elia}\\
	{\smallit Department of Mathematics, SUNY Geneseo, Geneseo, NY}\\
	{\tt mse1@geneseo.edu}\\ 
	\vskip 10pt
	{\bf Amanda Tucker}\\
	{\smallit Department of Mathematics, University of Rochester, Rochester, NY}\\
	{\tt amanda.tucker@rochester.edu}\\ 
\end{center}
\vskip 30pt
%\centerline{\smallit Received: , Revised: , Accepted: , Published: } % We will fill in the dates
\vskip 30pt

\centerline{\bf Abstract}
\noindent
 Pairs of consecutive integers have the same height in the Collatz problem with surprising frequency. Garner gave a conjectural family of conditions for exactly when this occurs. Our main result is an infinite family of counterexamples to Garner's conjecture. 
\pagestyle{myheadings} 
\markright{\smalltt INTEGERS: 15 (2015)\hfill} 
\thispagestyle{empty} 
\baselineskip=12.875pt 
\vskip 30pt

\section{Introduction} \label{intro}

The Collatz function $C$ is a recursively defined function on the positive integers given by the following definition.
\[
C^k(n)=
\begin{cases}
n, \text{ if } k=0\\
C^{k-1}(n)/2, \text{ if }  C^{k-1}(n)\text{ is even}\\
3*C^{k-1}(n)+1, \text{ if } C^{k-1}(n)\text{ is odd}.
\end{cases}
\]
The famed Collatz conjecture states that, under the Collatz map, every positive integer converges to one~\cite{Lagarias}.
The \emph{trajectory} of a number is the path it takes to reach one.  For example, the trajectory of three is
\begin{equation*}
3\rightarrow 10\rightarrow 5\rightarrow 16\rightarrow 8\rightarrow 4\rightarrow 2\rightarrow 1.
\end{equation*}
The \emph{parity vector} of a number is its trajectory considered modulo two. So the parity vector of three is
\begin{equation*}
\langle 1,0,1,0,0,0,0,1\rangle.
\end{equation*}

Because applying the map  $n \mapsto 3n+1$ to an odd number will always yield an even number, it is sometimes more convenient to use the following alternate definition of the Collatz map, often called $T$ in the literature.
\[
T^k(n)=
\begin{cases}
n, \text{ if } k=0\\
T^{k-1}(n)/2, \text{ if }  T^{k-1}(n)\text{ is even}\\
(3*T^{k-1}(n)+1)/2, \text{ if } T^{k-1}(n)\text{ is odd}.
\end{cases}
\]
With this new definition, the trajectory of three becomes
\begin{equation*}
3\rightarrow 5\rightarrow 8\rightarrow 4\rightarrow 2\rightarrow 1
\end{equation*}
and its $T$ parity vector is 
\begin{equation*}
\langle 1,1,0,0,0,1\rangle.
\end{equation*}

Since the Collatz conjecture states that, for every positive integer $n$, there exists a non-negative integer $k$ such that $C^k(n)=1$, it is natural to ask for the smallest such value of $k$. This $k$ is called the \emph{height} of $n$ and denoted H$(n)$. So, for example, the height of three is seven because it requires seven iterations of the map $C$ for three to reach seven. In this paper, height is used only in association with the map $C$, never the map $T$.

It turns out that consecutive integers frequently have the same height.  Garner made a conjecture that attempts to predict, in terms of the map $T$ and its parity vectors, exactly which pairs have the same height~\cite{Garner}. He proved that his condition is sufficient to guarantee two consecutive numbers will have the same height, but only surmised that it is a necessary condition.

The main idea in this paper is that phrasing Garner's conjecture in terms of the map $C$ reveals an easier-to-verify implication of Garner's conjecture, namely, that if two consecutive integers have the same height, then they must reach $4$ and $5\pmod 8$ at the same step of their trajectory (see Proposition~\ref{GarnerEquivalent}).  Because this condition is much easier to check than the conclusion of Garner's conjecture, we were able to find an infinite family of pairs of consecutive integers that do not satisfy this condition, and, hence, constitute counterexamples to Garner's conjecture (see Theorem~\ref{MainTheorem}).

\begin{bf}Acknowledgements:\end{bf}
This research was made possible by an Undergraduate  Research Fellowship  from the Research Foundation for SUNY. In addition, we would like to thank Jeff Lagarias and Steven J. Miller for helpful conversations.  
We would also like to thank the referee for careful reading and helpful suggestions and corrections.

\section{Heights of consecutive integers}\label{heights}
Recall that the smallest non-negative $k$  such that $C^k(n)=1$ is called the \emph{height} of $n$ and denoted H$(n)$. The following is a graph of the height $H$ as a function of $n$.
\begin{center}
\includegraphics[height=2.4in]{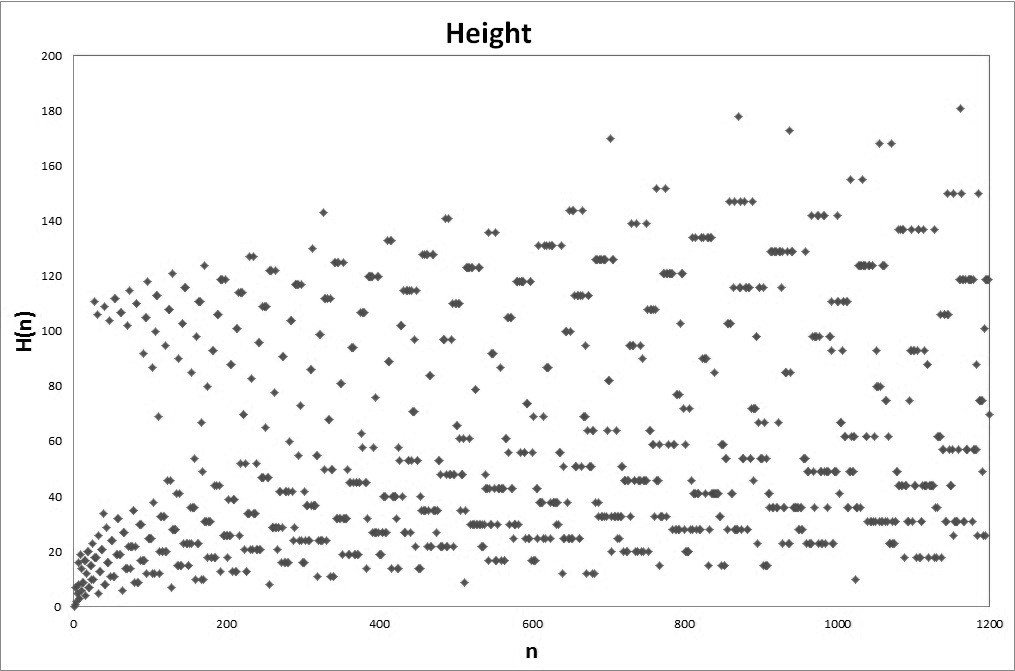}
\end{center}

The striking regularity in the above graph is the starting point for our studies, but remains largely elusive. If one na\"ively searches for curves of best fit to the visible curves therein, one quickly runs into a problem.  What appear to be distinct points  in the above graph are actually clusters of points, as can be seen below. Thus, it is not entirely clear which points one ought to work with when trying to find a curve of best fit.

\begin{center}
 \includegraphics[height=2.4in]{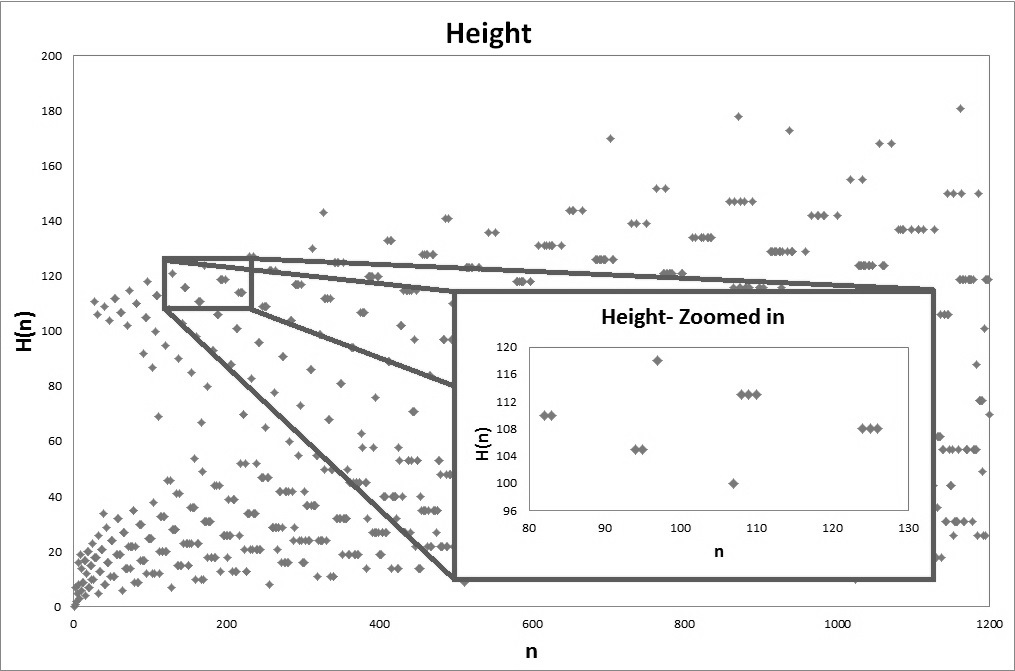}\\
\end{center}

 This leads to the surprising observation that many consecutive integers have the same height.  This is counterintuitive because if two integers are consecutive then they are of opposite parity, so the Collatz map initially causes one to increase ($n \mapsto 3n+1$) and the other to decrease ($n \rightarrow \frac{n}{2}$).  How, then, do they reach one in the same number of iterations? We give a sufficient congruence condition to guarantee two consecutive numbers will have the same height, and show that an all-encompassing theorem like Garner conjectured in~\cite{Garner} is not possible. In fact, we show the situation is much more complicated than Garner originally thought.
 
 The first pair of consecutive integers with the same height is twelve  and thirteen.  We see that for both numbers, $C^3(n)=10$.  Clearly, once their trajectories coincide, they will stay together and have the same height. This happens because twelve follows the path\\
 \begin{equation*}
 12\rightarrow 6\rightarrow 3 \rightarrow 10,
 \end{equation*}
 and thirteen follows the path\\
 \begin{equation*}
 13\rightarrow 40\rightarrow 20\rightarrow 10.
 \end{equation*}
 Now we seek to generalize this.  It turns out that twelve and thirteen merely form  the first example of a general phenomenon, namely, numbers that are $4$  and $5\pmod{8}$  always coincide after the third iteration.  The following result agrees with what Garner found using parity vectors~\cite{Garner}.

 \begin{theorem}
 If $n>4$ is congruent to $4\pmod{8}$, then $n$ and $n+1$ coincide at the third iteration and, hence, have the same height.
 \end{theorem}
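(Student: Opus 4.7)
The plan is to write $n = 8k+4$ with $k \geq 1$ (the hypothesis $n > 4$ is exactly the condition $k \geq 1$), so that $n+1 = 8k+5$, and then just iterate $C$ three times on each, tracking parities as we go.

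For $n = 8k+4$, both $n$ and $n/2 = 4k+2$ are even, so the first two steps are divisions by two, producing $C^{2}(n) = 2k+1$. This is odd, so the third step applies $3m+1$, giving $C^{3}(n) = 6k+4$. For $n+1 = 8k+5$, the number is odd, so the first step applies $3m+1$, giving $C(n+1) = 24k+16$. This is divisible by $4$, so the next two steps are divisions by two, producing $C^{2}(n+1) = 12k+8$ and $C^{3}(n+1) = 6k+4$. Both arrive at the common value $6k+4$ at step three, as desired; once they coincide, their subsequent trajectories are identical, so $H(n) = H(n+1)$.

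The only subtle point is making sure the parity pattern is forced: I need to know that $8k+4$ and $4k+2$ are both even (clear), that $2k+1$ is odd (clear), and that $24k+16$ and $12k+8$ are both even (clear, since both are divisible by $4$). The hypothesis $n > 4$, equivalently $k \geq 1$, is needed only to prevent the trajectory from reaching $1$ before step three: if $k = 0$ then $n = 4$ already has height $2 < 3$. No genuine obstacle arises; the proposition reduces to a three-line unwinding of the definition of $C$, and this is likely the same computation underlying Garner's parity-vector formulation.
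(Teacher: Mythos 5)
Your proof is correct and follows essentially the same route as the paper: write $n = 8k+4$, compute the three-step trajectories $8k+4 \to 4k+2 \to 2k+1 \to 6k+4$ and $8k+5 \to 24k+16 \to 12k+8 \to 6k+4$, and observe they meet at $6k+4$. Your extra remark explaining why $n>4$ (i.e., $k\geq 1$) is needed is a small clarification the paper omits.
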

 \begin{proof}
 Suppose $n>4$ and  $n\equiv 4\pmod{8}$. Then $n=8k+4$, for some $k\in\mathbb{N}$.  Then, because $8k+4$ and $4k+2$ are even, while $2k+1$ is odd,  the trajectory of $n$ under the map $C$ is
 \[
 8k + 4 \rightarrow 4k+2 \rightarrow 2k+1 \rightarrow 6k+4.
 \]
 Because $n+1=8k+5$ is odd, and $24k+16$ and $12k+8$ are even, the trajectory of $n+1$ under the map $C$ is
 
 \[
 8k+5 \rightarrow 24k+16 \rightarrow 12k+8  \rightarrow 6k+4. 
 \]
Therefore, $n$ and $n+1$ coincide at the third iteration.
 \end{proof}

  \section{Garner's conjecture}
 Garner wanted to generalize this to predict all possible pairs of consecutive integers that coincide.   Since he used the map $T$ (defined in Section~\ref{intro}) instead of the map $C$, we will do the same in this section except during the proof of Proposition~\ref{GarnerEquivalent}. He observed that whenever two consecutive integers have the same height, their parity vectors appear to end in certain pairs of corresponding \emph{stems} immediately before coinciding.  He defined a \emph{stem} as a parity vector of the form
 \begin{equation*}
 s_i = \langle 0,\underbrace{ 1, 1, ..., 1}_{i\:\: 1's}, 0, 1\rangle,
 \end{equation*}
 and the \emph{corresponding stem} as
 \begin{equation*}
 s_i'=\langle 1,\underbrace{1,1,...,1}_{i\: \: 1's},0,0\rangle.
 \end{equation*}
  LaTourette used the following definitions of a stem and a block in her senior thesis~\cite{LaTourette}, which we adhere to here as well. In what follows, we write $T_w(n)$ to mean apply the sequence of steps indicated by the parity vector $w$ to the input $n$ using the map $T$.
  \begin{definition}
 (LaTourette) A pair of parity sequences $s$ and $s'$ of length $k$ are \emph{corresponding stems} if, for any integer $x$, $T_s(x) = T_{s'}(x+1)$ and, for any initial subsequences $v$ and $v'$ of $s$ and $s'$ of equal length, $|T_v(x)-T_{v'}(x+1)|\ne 1$ and $T_v(x)\ne T_{v'}(x+1)$.
  \end{definition}
 \begin{definition}
 (LaTourette) A \emph{block} prefix is a pair of parity sequences $b$ and $b'$, each of length $k$, such that for all positive integers $x$, $T_b(x)+1 = T_{b'}(x+1)$.
 \end{definition}
  In his conclusion, Garner conjectured that all  corresponding stems will be of the form $s_i$ and $s_i'$ listed above. LaTourette conjectured the same. 
  \begin{conjecture}~\label{GarnConj}
  (Garner) Any pair of consecutive integers of the same height will have parity vectors for the non-overlapping parts of their trajectories ending in $s_i$ and $s_i'$~\cite{Garner}. 
   \end{conjecture}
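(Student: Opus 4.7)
The plan is to reformulate Conjecture~\ref{GarnConj} as a combinatorial classification: the conjecture is equivalent to the assertion that every pair of \emph{corresponding stems}, in the sense of LaTourette's definition, is of the form $(s_i, s_i')$ for some $i\ge 0$. To attack this, first observe that the map $T_w$ associated to any parity vector $w = (w_1, \ldots, w_k)$ with $j$ ones acts as a formal affine function $T_w(x) = (3^j x + c_w)/2^k$, where $c_w$ is an integer determined by the positions of the ones in $w$. The defining condition $T_s(x) = T_{s'}(x+1)$ for a corresponding stem pair then collapses to the single arithmetic identity $c_s - c_{s'} = 3^j$, where $s$ and $s'$ must have matching length $k$ and the same number of ones $j$.

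With the problem in this form, I would first verify the identity $c_{s_i} - c_{s_i'} = 3^{i+1}$ by direct computation, recovering Garner's sufficient condition in a clean form, and then turn to the classification. The strategy is an induction on the length $k$: for small $k$, enumerate all pairs $(s, s')$ of parity vectors of length $k$ with a common number of ones and check which satisfy the identity together with LaTourette's primitivity conditions on all equal-length prefixes. For the inductive step, one would analyze how the constants $c_v$ and $c_{v'}$ evolve under appending a single bit, hoping to show that each extension is essentially forced if the pair is to remain a candidate corresponding stem, and that the only compatible chains produce the Garner/LaTourette family $(s_i, s_i')$.

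The main obstacle, I expect, is that the constraint $c_s - c_{s'} = 3^j$ is a single linear equation in the many independent parameters recording where the ones of $s$ and $s'$ sit, and the primitivity conditions on prefixes, while nontrivial, do not obviously cut the solution set down to a single one-parameter family. There is no a priori reason why every integer solution of the arithmetic identity should factor through the pair $(s_i, s_i')$. Given the abstract's promise of an infinite family of counterexamples, I would expect the induction to break at some moderate $k$, with extraneous pairs $(s, s')$ satisfying both the identity and the primitivity conditions yet not equal to any $(s_i, s_i')$. I anticipate that the paper exploits precisely such extraneous stems, detected via the congruence class of $n$ modulo a suitable power of $2$ using Proposition~\ref{GarnerEquivalent}, to produce its infinite family of counterexamples to Garner's conjecture.
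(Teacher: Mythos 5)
First, a framing point: Conjecture~\ref{GarnConj} is never proved in the paper --- it is stated only to be refuted, and your proposal correctly senses this --- so the real question is whether your plan amounts to a viable disproof. It does not yet. Your reformulation of the conjecture as ``every pair of corresponding stems equals some $(s_i,s_i')$'' is asserted as an equivalence, but neither direction is free. The affine form $T_w(x)=(3^jx+c_w)/2^k$ and the identity $c_s-c_{s'}=3^j$ presuppose that the two non-overlapping trajectory segments have the same length and the same number of ones, which is not automatic from ``same height and first coincidence at step $k$'' (height counts $C$-steps, and the two numbers need not take the same number of odd steps before coinciding). More seriously, even if your enumeration produced an extraneous pair $(s,s')$ satisfying the identity and the primitivity conditions, Garner's conjecture is a statement about actual pairs of consecutive integers: you would still have to exhibit an $n$ such that the trajectories of $n$ and $n+1$ terminate in exactly that stem with no earlier coincidence. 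That realization step is the entire content of a disproof, and it is missing; the induction on $k$ is left as a hope rather than an argument, and no counterexample is produced.

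The paper's route is different and more concrete. Proposition~\ref{GarnerEquivalent} rewrites Garner's stems in terms of the map $C$ and shows that, whatever the stem length $i$, the conjecture forces $C^{k-3}(n)$ and $C^{k-3}(n+1)$ to be $4$ and $5\pmod{8}$ (in some order) three steps before the first coincidence at step $k$; equivalently, the $C$-parity vectors before coinciding must end in $\langle 0,0,1\rangle$ and $\langle 1,0,0\rangle$. This is a finitely checkable condition, uniform in $i$, so a computer search becomes feasible; it turns up the explicit pair $3067$ and $3068$, whose terminal parity data violate the condition. Theorem~\ref{MainTheorem} then lifts the single example to infinitely many using Terras's bijection: every integer of the form $2^{19}m+3067$ shares the relevant initial parity vector with $3067$, so each such pair coincides in the same way and is again a counterexample. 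Your closing intuition --- that the obstruction is detected modulo a power of $2$ via Proposition~\ref{GarnerEquivalent} --- matches the paper's mechanism, but without the explicit pair and the lifting argument the proposal establishes nothing about the truth or falsity of the conjecture.
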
  
   
   Garner gave no bound on the length of stem involved, though, so searching for counterexamples by computer was a lengthy task. The big innovation in this paper is that using the map $C$ instead of the map $T$ yields a much simple implication of Garner's conjecture, which makes it possible to search for counterexamples.   
   
   \begin{proposition}\label{GarnerEquivalent}
    If $n$ and $n+1$ have parity vectors for the non-overlapping parts of their trajectories ending in $s_i$ and $s_i'$, and $k$ is the smallest positive integer such that $C^k(n)=C^k(n+1)$, then $C^{k-3}(n)\equiv 4\pmod{8}$ and $C^{k-3}(n+1)=C^{k-3}(n)+1$ or  $C^{k-3}(n+1)\equiv 4\pmod{8}$ and $C^{k-3}(n)=C^{k-3}(n+1)+1$.
   \end{proposition}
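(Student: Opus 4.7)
The plan is to work in the $T$-picture (where the stems live) and translate to $C$ at the end. WLOG, $n$'s $T$-trajectory follows $s_i$ and $(n+1)$'s follows $s_i'$ on the stem (the other case gives the other alternative in the conclusion by the same argument). Let $m$ and $m+1$ be the values at which the stems begin, and write $m=m_0,m_1,\ldots,m_{i+3}$ and $m+1=m'_0,m'_1,\ldots,m'_{i+3}$ for the resulting $T$-trajectories, both ending at $v := m_{i+3}=m'_{i+3}$. Each $0$ in a stem corresponds to one $C$-step (halve) and each $1$ corresponds to two $C$-steps ($3n+1$, then halve), so both trajectories reach $v$ in $2i+4$ $C$-steps. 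The key observation is that the last entry of $s_i$ is a $1$ whereas the last entry of $s_i'$ is a $0$: one $C$-step before $v$, $n$'s trajectory sits at the intermediate value $3m_{i+2}+1$, while $(n+1)$'s sits at $m'_{i+2}$, and from $v = (3m_{i+2}+1)/2 = m'_{i+2}/2$ these are equal. So the $C$-coincidence happens one $C$-step before the $T$-coincidence, and $k$ equals the $C$-distance from $n$ to $m$ plus $2i+3$.

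Next I identify the values three $C$-steps earlier. Bookkeeping $C$-step indices of $T$-values versus $3n+1$-intermediate values gives, for $i\ge 1$, $C^{k-3}(n)=3m_i+1$ (the intermediate inside the $(i+1)$-st $T$-step of $n$, which is a $1$-step) and $C^{k-3}(n+1)=m'_i$ (a $T$-value of $(n+1)$'s stem trajectory); for $i=0$ these reduce to $m$ and $m+1$. Unwinding the $T$-recurrences backwards from $v$ gives $m'_{i+2}=3m_{i+2}+1$ (from the last $T$-steps), then $m'_{i+1}=3m_{i+1}+2$ (both $T$-step $i+2$'s are $0$-steps), and finally $m'_i=2m_{i+1}+1=3m_i+2$ (both $T$-step $i+1$'s are $1$-steps). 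Hence $C^{k-3}(n+1)-C^{k-3}(n)=(3m_i+2)-(3m_i+1)=1$, as required (trivially for $i=0$).

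Finally, the stem $s_i$ forces $m_i$ odd, $m_{i+1}=(3m_i+1)/2$ even, and $m_{i+2}=(3m_i+1)/4$ odd; the last of these is precisely the statement $3m_i+1\equiv 4\pmod{8}$, so $C^{k-3}(n)\equiv 4\pmod{8}$ and $C^{k-3}(n+1)\equiv 5\pmod{8}$. For $i=0$ the same identity applied to $m=m_0$ gives $m\equiv 4\pmod{8}$ directly. The main subtlety I expect is the off-by-one in the first paragraph: the shared $C$-value at coincidence is simultaneously a $3n+1$-intermediate of $n$'s trajectory and a genuine $T$-value of $(n+1)$'s, which is why three $C$-steps (not two or four) is the right backward distance.
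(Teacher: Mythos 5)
Your proof is correct, and it rests on the same two observations that drive the paper's argument: the first $C$-coincidence occurs one $C$-step before the shared $T$-value at the end of the stems, and three $C$-steps earlier the two trajectories sit at consecutive integers congruent to $4$ and $5\pmod 8$. The execution differs in bookkeeping. The paper converts $s_i$ and $s_i'$ into $C$-parity vectors by inserting a $0$ after each $1$, drops the final shared $0$, reads off the residues $4$ and $5\pmod 8$ from the resulting length-three tails $\langle 0,0,1\rangle$ and $\langle 1,0,0\rangle$ via Terras's bijection, and then applies $C^{-1}$ three times to the coincidence value $j$ to obtain $\tfrac{4j-1}{3}-1$ and $\tfrac{4j-1}{3}$. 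You instead stay in the $T$-picture, unwind the $T$-recurrences backwards from the common value $v$ to get $m_i'=3m_i+2$ against $C^{k-3}(n)=3m_i+1$, and extract the mod-$8$ condition from the odd/even/odd pattern forced by the tail of $s_i$; this avoids invoking Terras and makes fully explicit both the off-by-one between $C$- and $T$-coincidence and the fact that $C^{k-3}(n)$ is an intermediate $3x+1$ value rather than a $T$-iterate, points the paper's proof passes over more quickly. Your case split between $i=0$ and $i\ge 1$ is handled correctly, and your identification of the stem's starting values as $m$ and $m+1$ is justified by the definition of corresponding stems.
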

   \begin{proof}
   To see this, we must change the Garner stems to be consistent with the map $C$.  Converting the parity vectors simply involves inserting an extra `0' after each `1'.  So Garner's stems in terms of the map $C$ now look like
    \begin{equation*}
    s_i = \langle 0,\underbrace{ 1,0, 1,0, ..., 1,0}_{i\:\: 1,0's}, 0, 1,0\rangle,
    \end{equation*}
    and
    \begin{equation*}
    s_i'=\langle 1,0,\underbrace{1,0,1,0,...,1,0}_{i\: \: 1,0's},0,0\rangle.
    \end{equation*}
    Now we will rearrange this more strategically. We have
    \begin{equation*}
    s_i = \langle \underbrace{0, 1,0, 1, ..., 0,1}_{i\:\: 0,1's},0, 0, 1,0\rangle,
    \end{equation*}
    and
    \begin{equation*}
    s_i'=\langle \underbrace{1,0,1,0,...,1,0}_{i\: \: 1,0's},1,0,0,0\rangle.
    \end{equation*}
    
    The point of these stems is that the trajectories coincide right after this vector.  Since both end with a `0', they have coincided one step before the end, so we can simply omit the last `0'.  Now the corresponding stems are only $\langle 0,0,1\rangle$ and $\langle 1,0,0\rangle$, with repeated blocks in front of them. Terras\cite{Terras} proved that there is a bijection between the set of integers modulo $2^k$ and the set of parity vectors of length $k$. The algorithm to get from a parity vector of length 3 to an integer modulo 8 is explicit, so we can easily determine that numbers with those parity vectors are congruent to 4 and 5$\pmod{8}$, respectively. 
    
    Let $j$ be the point at which they coincide, so $C^k(n) = C^k(n+1) = j$. Applying $C^{-1}$ to $j$ as prescribed by both  $\langle 0,0,1\rangle$ and $\langle 1,0,0\rangle$ yields $\frac{4j-1}{3}-1$ and $\frac{4j-1}{3}$, respectively. Thus, we see that $C^{k-3}(n+1)=C^{k-3}(n)+1$. An identical argument yields the case where $C^{k-3}(n+1)\equiv 4\pmod{8}$, and we get $C^{k-3}(n)=C^{k-3}(n+1)+1$ in that case as well.
    \end{proof}
 So, written in terms of the map $C$,  all of Garner's other stems are simply repeated blocks of `$01$' and `$10$' in front of the stems $\langle 0,0,1\rangle$ and $\langle 1,0,0\rangle$. This is the benefit of applying the map $C$ in this situation. It is now feasible to check if a pair of consecutive integers is a counterexample to Garner's conjecture.  Suppose $n$ and $n+1$ have the same height.  According to Garner's conjecture, $n$ and $n+1$ would have $T$ parity vectors before coinciding that end in $s_i$ and $s_i'$.  By Proposition \ref{GarnerEquivalent}, this would in turn imply that $n$ and $n+1$ have $C$ parity vectors ending in $\langle 0,0,1\rangle$ and $\langle 1,0,0\rangle$.  Therefore, if we find a pair of positive integers $n$ and $n+1$ such that their parity vectors do not end in $\langle 0,0,1\rangle$ and $\langle 1,0,0\rangle$, we have found a counterexample to Garner's conjecture.\\
 % Rather than checking if a parity vector contains an element from an infinite family of stems, we can simply check if the trajectories of the pair contain elements that are 4 and 5$\pmod{8}$ three steps before coinciding (or, equivalently, check if the non-overlapping parts of their parity vectors end with $\langle 0,0,1\rangle$ and $\langle 1,0,0\rangle$).\\
 \section{A counterexample to Garner's conjecture}
 We initially believed Garner's conjecture, but have since found many counterexamples. The first counterexample is the pair 3067 and 3068. The $C$-parity vector of $3067$ before coinciding with $3068$  is
 \begin{eqnarray*}
 \langle 1, 0, 1, 0, 0, 1, 0, 1, 0, 0, 1, 0, 1, 0, 0, 1, 0, 0, 1, 0, 0, 0, 1, 
 0, 0, 0, 1 \rangle,
 \end{eqnarray*}
 and that of $3068$ is 
 \begin{eqnarray*}
 \langle 0, 0, 1, 0, 1, 0, 1, 0, 1, 0, 1, 0, 1, 0, 1, 0, 1, 0, 0, 1, 0, 0, 1, 
 0, 0, 0, 0\rangle.
 \end{eqnarray*}
 By inspection, the parity vectors do not end with  $\langle 0,0,1\rangle$ and $\langle 1,0,0\rangle$ as Garner predicted. Thus, Garner's conjecture is false.\\

  A computer search found that there are 946 counterexample pairs less than a million. For numbers less than 5 billion, $0.214$\% of pairs of consecutive integers of the same height are counterexamples. By a simple argument, we can see that there must be infinitely many counterexample pairs.  \\
  \begin{theorem}\label{MainTheorem}
  There are infinitely many counterexamples to Garner's conjecture.
  \end{theorem}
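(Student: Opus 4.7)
The plan is to bootstrap from the single counterexample $(3067, 3068)$ by shifting both numbers by sufficiently large powers of two, using the standard fact that the first several iterations of $C$ are determined by a residue class modulo $2^M$. Let $k = 27$ denote the common length of the two $C$-parity vectors displayed above for $3067$ and $3068$, and note by direct inspection that each of those vectors contains $10$ ones and $17$ zeros.

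First I would record the $C$-analog of Terras' observation: if a positive integer $x$ has $C$-parity vector of length $k$ with $a$ ones and $b$ zeros, then for every integer $M \geq b$ and every integer $t \geq 0$, the integer $x + 2^M t$ has the same first-$k$ $C$-parity vector as $x$, and
\[
C^k(x + 2^M t) \;=\; C^k(x) + 3^{a}\,2^{M-b}\,t.
\]
This is a routine induction on $k$: each halving step turns $2^M$ into $2^{M-1}$, and each odd step $y \mapsto 3y+1$ turns $2^M$ into $3 \cdot 2^M$, and in both cases the parity of the iterate is preserved so long as $M$ exceeds the number of halvings performed so far.

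Applying this at $k = 27$ to both $n = 3067$ and $n+1 = 3068$, which share the same ones-count $a = 10$ and zeros-count $b = 17$, and using the given equality $C^{27}(3067) = C^{27}(3068)$, one obtains
\[
C^{27}(3067 + 2^M t) \;=\; C^{27}(3068 + 2^M t)
\]
for every $M \geq 17$ and every $t \geq 1$. Thus $(3067 + 2^M t,\, 3068 + 2^M t)$ is a consecutive pair of equal height, and its first $27$ $C$-parity vector entries coincide with the ``bad'' tails displayed in the paper.

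The step I expect to be the main obstacle is showing that the \emph{first} coincidence step of the shifted pair is still exactly $27$ for all sufficiently large $M$, because only then does its entire non-overlapping parity vector agree with that of $(3067, 3068)$ and thus fail to end in $\langle 0,0,1\rangle, \langle 1,0,0\rangle$. For each $j$ with $1 \leq j < 27$, the original pair satisfies $C^j(3067) \neq C^j(3068)$; after shifting, the two values differ either by the same integer shift (when the ones-counts of the two parity prefixes at step $j$ happen to agree, in which case the fixed nonzero gap $C^j(3068) - C^j(3067)$ persists) or by shifts whose difference $3^{a_j'} 2^{M-b_j'} - 3^{a_j} 2^{M-b_j}$ grows without bound in $M$ (when the ones-counts disagree, making it impossible for this to equal the fixed integer $C^j(3067) - C^j(3068)$ once $M$ is large). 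Since only $26$ values of $j$ need to be ruled out, a uniform threshold $M_0$ exists; taking $t = 1$ and $M \geq M_0$ then yields infinitely many distinct counterexample pairs, completing the proof.
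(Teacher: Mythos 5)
Your proof takes essentially the same route as the paper's: both generate infinitely many counterexamples by shifting the pair $(3067,3068)$ by multiples of a large power of $2$ and invoking the Terras-type fact that the initial parity vector (and hence the entire pre-coincidence behavior) depends only on the residue class modulo that power of $2$. Your version is in fact more careful than the paper's one-paragraph argument, since you explicitly rule out the shifted pairs coinciding \emph{earlier} than step $27$ (which is needed for the bad tail to survive as the non-overlapping part), a point the paper leaves implicit.
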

  \begin{proof}
  Consider the parity vectors of 3067 and 3068 up to the point where they coincide. We know that there will be a pair with the same parity vectors for every integer of the form $2^{19}m + 3067$ by Terras's bijection\cite{Terras}. Each of these pairs will coincide in the same way that 3067 and 3068 do and, thus, have the same height. Therefore, there are infinitely many counterexamples to Garner's conjecture.
  \end{proof}
  
   \section{Conclusion}
 At this point, we look at those numbers that do not have the stems Garner predicted to see why they coincide. To salvage Garner's conjecture, we seek to expand the list of possible stems.  To see what is going on, we have no choice but to examine the trajectories of 3067 and 3068, side by side (See Appendix A).

We can see that there are no other places within the trajectories where their values have a difference of one.  Therefore, by the current definition of a stem, the entire parity vector of length 27 (up until they coincide at 1384) is a new stem.  However, by this logic, the next counterexample, 4088 and 4089, has a new stem of length 30.  The next pair, 6135 and 6136, has a stem of length 28.  It would be ridiculous to have only one stem (of length 3) before 3067 and to suddenly add dozens more of varying lengths.  Instead, we look for some new type of stems within these counterexample, stems that do not start with consecutive integers.  The trajectories of all three pairs listed above coincide at 1384. In fact, they have the same 22 elements leading up to that.  Thus, it is tempting to label that beginning as the stem.  But if we look further, the consecutive integers 32743 and 32744 join that group just 5 steps before coinciding at 1384. Therefore, the situation is much more complicated than Garner's stems. It would be interesting to know if there is some pattern similar to what Garner conjectured, perhaps with a much-expanded list of stems, that explains every pair of consecutive numbers that converges together. However, we have found no such simple salvage of Garner's conjecture.

 We have shown that pairs of integers of the form $8m+4$ and $8m+5$ have coinciding trajectories after 3 steps (and therefore have the same height).  We have also shown that all pairs that obey Garner's conjecture ultimately reduce down to the $4$ and $5 \pmod{8}$ case before coinciding. This allowed us to find that 3067 and 3068 form the smallest of an infinite family of counterexamples to Garner's longstanding conjecture~\cite{Garner}.

 \section{Appendix A}
 This chart shows the partial trajectories of the first eight counterexample pairs to Garner's conjecture.\\
 \begin{center}
 \includegraphics[width=5.4in]{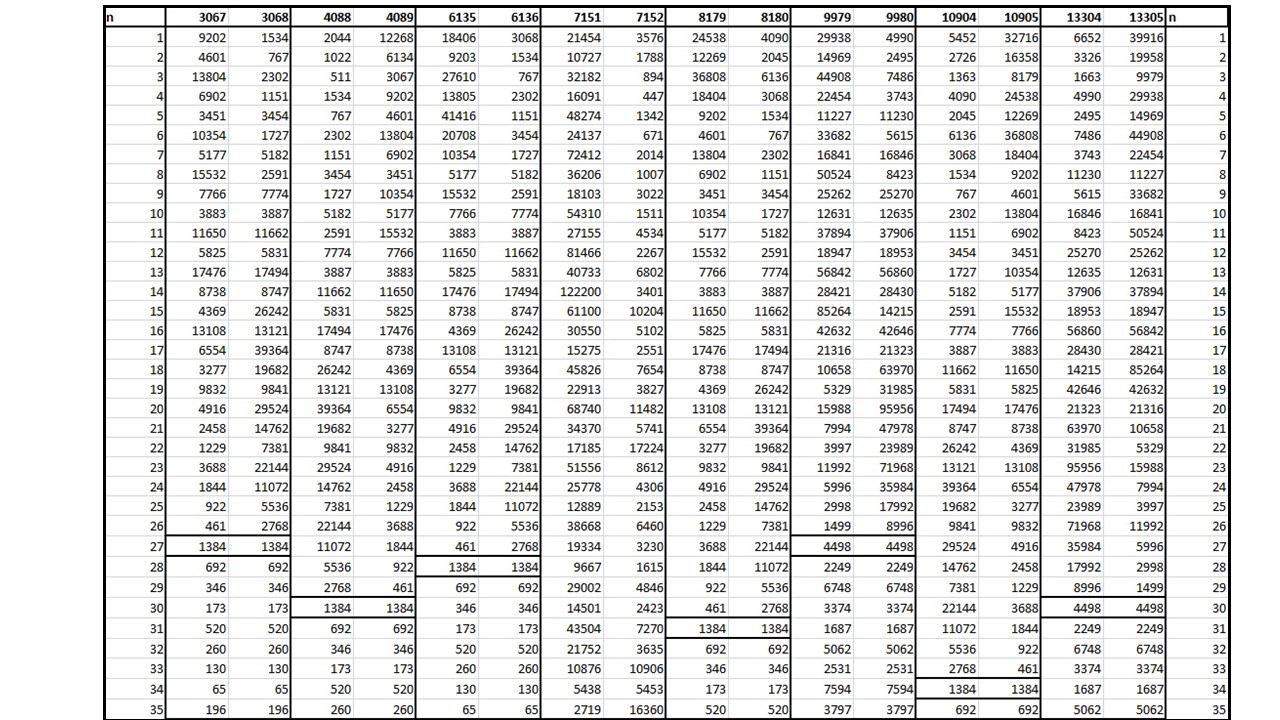}
\end{center}

\end{document}